\documentclass[10pt]{amsart}
\setlength{\paperwidth}{17cm}
\setlength{\paperheight}{24cm}
\setlength{\textwidth}{12.5cm}
\setlength{\textheight}{19cm}
\setlength{\hoffset}{-2.2cm}
\setlength{\voffset}{-1cm}
\setlength{\parindent}{16pt}
\usepackage{amsfonts,amsmath,amsthm, amscd, amssymb, cite, color, latexsym, mathrsfs, mathtools,
slashed, stmaryrd, verbatim, wasysym}
\usepackage[colorlinks=true,linkcolor=blue,urlcolor=blue,citecolor=blue]{hyperref}
\usepackage[all]{xy}

%
  %

%\linespread{1.05}
%\linespread{scale factor} comando per definire lo spazio tra le linee.

%modifiers

\renewcommand{\tilde}{\widetilde}

 % restriction e.g. to boundary

%abbreviations

\newcommand\eps\varepsilon
\renewcommand\epsilon\varepsilon

%operator classes / function spaces

%temporary

 %Fiber Second fundamental form
 %Fiber curvature

%grouping

%modified symbols for formulas
 %To put B under the \times symbol
 %To put B under the \times symbol
 %long dash in math mode to indicate the fiber of a fibration

%text in formulas

\newcommand\Ch{\operatorname{Ch}}

 %Collar neighborhoods

 %logarithm of analytic torsion

\newcommand\reg{\operatorname{reg}}
\newcommand\sing{\operatorname{sing}}

% small words for formulas

%sections of the paper
\newcommand\paperintro%
        {%
         }
\newcommand\paperbody%
        {%
         }

%alphabets \bf \bb \c \s

%extra symbols

\DeclareMathAlphabet{\mathpzc}{OT1}{pzc}{m}{it}

\begin{document}

\newtheorem{theorem}{Theorem}[section]
\newtheorem{lemma}[theorem]{Lemma}
\newtheorem{proposition}[theorem]{Proposition}
\newtheorem{corollary}[theorem]{Corollary}

\theoremstyle{definition}
\newtheorem{definition}[theorem]{Definition}
\newtheorem{remark}[theorem]{Remark}
\newtheorem{example}[theorem]{Example}
\newtheorem{problem}[theorem]{Problem}
\newtheorem*{acknowledgements}{Acknowledgements}

\numberwithin{equation}{section}

\renewcommand{\subjclassname}{%
\textup{2010} Mathematics Subject Classification}

\title[A note on higher Todd genera of complex manifolds]{A note on higher Todd genera of complex manifolds}

\author[F. Bei and P. Piazza]{Francesco Bei and Paolo Piazza}

\keywords{Complex manifold, bimeromorphic map, resolution of singularities, analytic K-homology, Todd genera.}
\subjclass[2010]{ 32L10, 19L10, 32Q55, 57R20.}

\address{Dipartimento di Matematica, Sapienza Universit\`a di Roma}
\email{bei@mat.uniroma1.it}

\address{Dipartimento di Matematica, Sapienza Universit\`a di Roma}
\email{piazza@mat.uniroma1.it}

\maketitle

\begin{abstract}
Let $M$ be a compact complex manifold. In this paper we give a simple proof of the  bimeromorphic invariance of the higher Todd genera of $M$, a result first proved implicitly in \cite{BraSchYou} using algebraic methods.
\end{abstract}

\tableofcontents

\section{Introduction}

 Let $M$ be a smooth complex projective variety. It is well known that  $\pi_1 (M)$ is a birational invariant of $M$. Let ${\rm Td}(M)\in H^* (M,\mathbb{Q})$ be the Todd class of $M$. Let $s:M\to B\pi_1 (M)$ be a classifying map for the universal cover of $M$. The higher Todd genera of $M$ are defined as the rational numbers
 $$\{({\rm Td}(M)\cup s^* \alpha)[M]\,,\;\;\alpha\in H^* (B\pi_1 (M),\mathbb{Q})\}.$$
Jonathan Rosenberg \cite{rosenberg-tams}, building on a well-established argument for  proving  the oriented homotopy invariance of the higher signatures, proved  that if the assembly map $\beta: K_0 (B\pi_1 (M))\to K_0 (C^*_r \pi_1 (M))$ is rationally injective then the higher Todd genera are birational invariants. 
This result had been in fact proved unconditionally, i.e. without assuming the rational injectivity of $\beta$,
 in the preprint of Brasselet-Yokura-Sch\"urmann \cite{BSYarxiv},
later published in \cite{BraSchYou}.
Later, Block and Weinberger also proved the latter  result, see \cite{WBlock}. 
Yet another proof was given by Hilsum, using analytic methods \cite{Hilsumproj}.
All these articles use in a crucial way the weak factorization theorem for birational maps \cite{Factorization}.
 
In this short note we have two goals in mind. First we 
give a simple proof that the higher Todd genera are bimeromorphic invariants for smooth compact  complex manifolds,
a result already proved implicitly in \cite{BraSchYou} using a {\it motivic Chern class transformation}. Our proof  
 does not use the weak factorization theorem but relies instead on the notion of modification; moreover 
 our definition of bimeromorphic invariance for higher Todd genera is more general than the one adopted in all 
 of these articles. See Section \ref{sect:bimer}.
 Secondly, we extend these results to suitably defined (analytic) higher Todd genera for certain 
 complex analytic spaces with isolated singularities.

\medskip
\noindent
{\bf Acknowledgements.} 
This paper was partially written while the first author was a postdoc at the department of Mathematics of the University of Padova. He wishes to thank that institution  for financial support. We thank Jonathan Rosenberg and J\"org Sch\"urmann for useful comments and interesting discussions.

\section{Meromorphic maps and fundamental groups}

We recall some definitions and properties that will play a central role in the paper. Since it will be frequently used later on we remind  that an irreducible complex space $X$ is a reduced complex space such that $\reg(X)$, the regular part of $X$, is connected. We have now the following

\begin{definition}
\label{modification}
Let $W$ and $V$ be two  irreducible complex spaces. A proper and surjective holomorphic map $f:W\rightarrow V$ is a proper modification if there exists a nowhere dense analytic subset $X\subset V$ such that $Y=f^{-1}(X)$ is a nowhere dense analytic subset of $W$ and $f|_{W\setminus Y}:W\setminus Y\rightarrow V\setminus X$ is a biholomorphism.
\end{definition}
If $W$ and $V$ are compact then we will simply say that $f:W\rightarrow V$ is a modification.

\begin{definition}
\label{mero}
Let $W$ and $V$ be two irreducible complex spaces. A meromorphic map $f:W\dashrightarrow V$ is a map from $W$ to $P(V)$, the power set of $V$, such that 
\begin{enumerate}
\item $\overline{\mathcal{G}(f)}$, defined as the closure in $W\times V$ of $\mathcal{G}(f)=\{(x,y)\in W\times V\ such\ that\ y\in f(x)\}$, is an irreducible analytic subvariety of $W\times V$,
\item The natural projection $p_W:\overline{\mathcal{G}(f)}\rightarrow W$ is a proper modification.
\end{enumerate}
The map is called bimeromorphic if also $p_V:\overline{\mathcal{G}(f)}\rightarrow V$, the natural projection on $V$, is a modification.
\end{definition}
For more details see e.g   \cite{Peter}, \cite{Tomassini} or \cite{Ueno}. Definition \ref{mero} implies the existence of a smallest analytic subset $Z\subset W$, usually called the set of points of indeterminacy of $f$, such that $f$ is defined and holomorphic on $W\setminus Z$. We shall denote the open set $W\setminus Z$ as ${\rm Dom} (f)$.
If both $W$ and $V$ are normal, a  fundamental property is that the set of points of indeterminacy of $f$ has complex codimension at least $2$, see \cite{Ueno} Th. 2.5. Clearly the composition of two modifications is still a modification and any modification is a bimeromorphic map, see \cite[Rmk 1.8]{Peter}. In particular if $f:M\rightarrow N$ is proper modification of complex manifolds then the set of points of indeterminacy of $f^{-1}:N\dashrightarrow M$ has complex codimension at least 2. The following result is well known to the experts but as we could not find a quotable reference, we provide a proof for the benefit of the reader.

\begin{proposition}
\label{isofgroup}
Let $f:M\rightarrow N$ be a  modification between two compact  complex manifolds. Then $f_*:\pi_1(M)\rightarrow \pi_1(N)$ is an isomorphism
\end{proposition}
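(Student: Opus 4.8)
The plan is to exploit the fact that a modification $f:M\to N$ is a biholomorphism away from lower-dimensional analytic subsets, and to use a transversality/general position argument to show that loops and homotopies of loops can be pushed off these subsets. Concretely, let $X\subset N$ be the nowhere dense analytic subset from Definition \ref{modification}, with $Y=f^{-1}(X)$, so that $f\colon M\setminus Y\to N\setminus X$ is a biholomorphism. The key point is that $Y$ and $X$ are analytic subsets of complex codimension at least $1$, hence of real codimension at least $2$ in the smooth manifolds $M$ and $N$; moreover, by the remark preceding the proposition, the indeterminacy locus of $f^{-1}$ has complex codimension at least $2$, so in fact one can arrange the relevant exceptional sets to have real codimension $\geq 2$ on both sides. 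A subset of real codimension $\geq 2$ in a manifold does not disconnect it and does not affect $\pi_1$: more precisely, for any smooth (or PL, or just sufficiently generic) loop $\gamma\colon S^1\to N$, a small perturbation makes $\gamma$ miss $X$, and for any homotopy $H\colon S^1\times[0,1]\to N$ between two loops in $N\setminus X$, a small perturbation of $H$ rel endpoints makes $H$ miss $X$ as well (here one uses that $X$, being a complex-analytic set, is a finite union of locally closed submanifolds of real codimension $\geq 2$, and a map of a $2$-complex can be perturbed off a codimension-$\geq 3$ stratum entirely and only meets the top, codimension-$2$ stratum, in isolated points which can then be removed since $\dim(S^1\times[0,1])=2$ does not force intersection — actually one wants the stratified transversality argument of, say, Goresky–MacPherson or a direct Sard-type argument).

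The steps I would carry out, in order: (1) Recall/establish the general-position lemma: if $A\subset P$ is a closed subset of a smooth manifold $P$ which is a locally finite union of submanifolds of real codimension $\geq 2$, then the inclusion $P\setminus A\hookrightarrow P$ induces an isomorphism on $\pi_0$ and $\pi_1$ and a surjection on $\pi_2$. Apply this to $X\subset N$ and to $Y\subset M$ to get that $\pi_1(M\setminus Y)\to\pi_1(M)$ and $\pi_1(N\setminus X)\to\pi_1(N)$ are isomorphisms. Here I must be careful that $Y=f^{-1}(X)$ really has real codimension $\geq 2$; since $f$ is a proper modification and $X$ is nowhere dense analytic, $Y$ is a nowhere dense analytic subset of $M$ by hypothesis, hence of complex codimension $\geq 1$, so this is fine — and for the $\pi_1$ statement codimension $\geq 2$ (real) suffices, so we do not even need the codimension-$2$ (complex) refinement for $Y$. (2) Observe that $f$ restricts to a biholomorphism $M\setminus Y\xrightarrow{\sim} N\setminus X$, hence induces an isomorphism $\pi_1(M\setminus Y)\xrightarrow{\sim}\pi_1(N\setminus X)$. (3) Chase the commutative square
\[
\begin{CD}
\pi_1(M\setminus Y) @>{\sim}>> \pi_1(N\setminus X)\\
@VVV @VVV\\
\pi_1(M) @>{f_*}>> \pi_1(N)
\end{CD}
\]
in which the top map and both vertical maps are isomorphisms by (1) and (2); hence $f_*$ is an isomorphism. (A base-point remark: choose the base point in $N\setminus X$ and its preimage in $M\setminus Y$; since $M$ and $N$ are connected — they are irreducible compact complex manifolds — and $M\setminus Y$, $N\setminus X$ are connected by the $\pi_0$ part of step (1), everything is compatible.)

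The main obstacle, and the only step requiring genuine care, is step (1): the statement that removing a real-codimension-$\geq 2$ analytic (or stratified) subset does not change $\pi_1$. For surjectivity of $\pi_1(P\setminus A)\to\pi_1(P)$ one perturbs a loop by transversality to the strata of $A$; since $1<2$, a generic loop misses $A$ entirely. For injectivity one perturbs a null-homotopy (a map of a disk, or of $S^1\times[0,1]$): a generic such map is transverse to all strata, meets strata of real codimension $3$ and higher not at all, and meets the codimension-$2$ strata in a finite set of points in the interior of the disk; each such point can be removed by pushing the homotopy across a small ball around it, using that $A$ has codimension $\geq 2$ so $\pi_1$ of a punctured ball — no, more simply: one modifies the disk to avoid these finitely many points, which is possible because a point has real codimension $2$ in the disk and one is free to reroute the $2$-dimensional homotopy around it within $P\setminus A$ provided $P\setminus A$ is, locally near $A$, $\pi_1$-trivial over these directions — the cleanest route is to invoke that $A$ can be triangulated so that $P\setminus A$ deformation retracts onto a subcomplex of $P$ containing the $1$-skeleton, giving $\pi_1(P\setminus A)\cong\pi_1(P)$ directly. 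In the write-up I would either cite a standard reference for "a subvariety of complex codimension $\geq 1$ does not affect $\pi_1$ of a smooth ambient manifold" or give the short triangulation/transversality argument sketched above; in any case this is classical and poses no real difficulty, it is merely the point where one must be precise about stratified transversality rather than hand-wave.
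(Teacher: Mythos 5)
Your step (1) contains a genuine error: it is \emph{not} true that removing a closed, locally finite union of submanifolds of real codimension $\geq 2$ induces an isomorphism on $\pi_1$; it induces only a surjection. Real codimension $2$ is exactly the critical case where injectivity fails: a generic null-homotopy (a map of a disk) meets a codimension-$2$ stratum in isolated points which in general \emph{cannot} be removed, because the linking circle of such a stratum may be nontrivial in the complement (compare $\mathbb{C}\setminus\{0\}\hookrightarrow\mathbb{C}$, or $\mathbb{C}^2\setminus\{z_1=0\}\hookrightarrow\mathbb{C}^2$). Your own write-up betrays the problem: the attempted "reroute the homotopy around the intersection points" step is precisely where the argument breaks down, and the triangulation shortcut (deformation retract onto a subcomplex containing the $1$-skeleton) likewise only yields surjectivity, since the $2$-cells needed to impose the relations of $\pi_1(P)$ need not lie in that subcomplex. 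This matters here because the exceptional set $Y=f^{-1}(X)\subset M$ of a modification typically \emph{is} a divisor (e.g.\ the exceptional divisor of a blow-up), i.e.\ has complex codimension exactly $1$, so your claim that $\pi_1(M\setminus Y)\to\pi_1(M)$ is an isomorphism is unjustified, and with it the conclusion of your diagram chase.

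The argument can be repaired, and the paper's proof shows how: the two sides are \emph{not} symmetric. On the $N$ side, $X$ is the indeterminacy locus of $f^{-1}\colon N\dashrightarrow M$ and therefore has complex codimension $\geq 2$, i.e.\ real codimension $\geq 4$; there a stratified transversality argument (the paper removes the strata of a Chirka stratification one at a time, applying Thom transversality at each step) does give an isomorphism $\pi_1(N\setminus X)\cong\pi_1(N)$. On the $M$ side one only gets that $i_*\colon\pi_1(M\setminus Y)\to\pi_1(M)$ is surjective, but that suffices: from $f_*\circ i_*=j_*\circ\bigl(f|_{M\setminus Y}\bigr)_*$, the right-hand side is an isomorphism, and an easy algebraic argument (if $f_*\circ i_*$ is bijective and $i_*$ is surjective, then $f_*$ is bijective) concludes. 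So your overall strategy --- compare $\pi_1$ of the complements via the biholomorphism and chase the square --- is the right one, but you must replace the false symmetric general-position lemma by the asymmetric statement and invoke the codimension-$\geq 2$ bound on the indeterminacy locus of $f^{-1}$, which you mention in passing but do not actually use.
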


\begin{proof}
Let $X\subset N$ and $Y\subset M$ be the smallest analytic subsets such that $f|_{M\setminus Y}:M\setminus Y\rightarrow N\setminus X$ is a biholomorphism. Thanks to \cite{Chirka} page 60 we can decompose $X$ as $X=\bigcup_{k=1}^{\ell} S_k$ such that 
\begin{itemize}
\item $S_k$ is a complex submanifold of $N$ for each $k=1,...,\ell$ and $S_k\cap S_j=\emptyset$ whenever $j\neq k$,
\item For each $k=1,...,\ell$ both $\overline{S}_k$ and $\overline{S}_k\setminus S_k$ are analytic subsets of $N$,
\item If $S_j\cap \overline{S}_k\neq \emptyset$ and $S_j\neq S_k$ then $S_j\subset \overline{S}_k$ and $\dim(S_j)<\dim(S_k)$.
\end{itemize}
Without loss of generality we can assume that $S_1,...,S_{\ell}$ are ordered in such a way that $\dim(S_i)\leq \dim(S_j)$ if $i\leq j$. It is easy to verify that the above properties imply that $S_1\cup...\cup S_{k}$ is closed in $N$ for each $k=1,...,\ell$. In particular  $N\setminus (S_1\cup...\cup S_{k})$ is  a complex manifold, in fact it is an open subset of $N$,  and $S_k$ is a closed complex submanifold of $N\setminus (S_1\cup...\cup S_{k-1})$. Moreover, as remarked above, we also know that the complex  codimension of $S_k$ satisfies $\mathrm{codim}_{\mathbb{C}}(S_k)\geq 2$ for each $k=1,...,\ell$. This follows by the fact that $X$ is the set of points of indeterminacy of $f^{-1}:N\dashrightarrow M$.
Clearly $Y$ has an analogous stratification in $M$ whose strata will be denoted with $T_1,...,T_r$. Also in this case we will assume that $\dim(T_i)\leq \dim(T_j)$ if $i\leq j$ and as in the previous case we have that  $T_1\cup...\cup T_{k}$ is closed in $M$ for each $k=1,...,r$.  As $\mathrm{codim}_{\mathbb{C}}(S_{k})\geq 2$ for any $k$ a well known application of Thom's transversality theorem tells us that the inclusion $(N\setminus S_{1})\hookrightarrow N$ induces an isomorphism $\pi_1(N\setminus S_{1})\cong \pi_1(N)$. Consider now $S_2$. Since it is a closed submanifold of $N\setminus S_1$ we have that $N\setminus (S_1\cup S_2)$ is still a (complex) manifold, and thus Thom's transversality theorem tells us that the inclusion $N\setminus (S_2\cup S_{1})\hookrightarrow N\setminus S_1$ induces an isomorphism $\pi_1(N\setminus (S_2\cup S_{1}))\cong \pi_1(N\setminus S_1)$. If we iterate this procedure at the $k$-th step we have $S_k$, which is a closed complex submanifold of   $N\setminus (S_1\cup...\cup S_{k-1})$, and again Thom's transversality theorem tells us that the inclusion $N\setminus (S_1\cup...\cup S_{k})\hookrightarrow N\setminus (S_1\cup...\cup S_{k-1})$ induces an isomorphism $\pi_1(N\setminus (S_{1}\cup...\cup S_{k}))\cong \pi_1(N\setminus (S_{1}\cup...\cup S_{k-1}))$. Finally after $\ell$-times we obtain that the inclusion $N\setminus (S_1\cup...\cup S_{\ell})\hookrightarrow N\setminus (S_1\cup...\cup S_{\ell-1})$ induces an isomorphism $\pi_1(N\setminus (S_{1}\cup...\cup S_{\ell}))\cong \pi_1(N\setminus (S_{1}\cup...\cup S_{\ell-1}))$. Composing all these maps and the corresponding isomorphisms we get that the inclusion $N\setminus X\hookrightarrow N$ induces an isomorphism $\pi_1(N\setminus X)\cong \pi_1(N)$. Moreover the same strategy applied to $M$ and $Y$ tells us that the inclusion $M\setminus Y\hookrightarrow M$ induces a surjective morphism $\pi_1(M\setminus Y)\rightarrow \pi_1(M)$. We remark that in this case we get a different result (in fact weaker as $\pi_1(M\setminus Y)\rightarrow \pi_1(M)$ is only an epimorphism) because, concerning the codimension of $Y$,  we only know that $\mathrm{codim}_{\mathbb{C}}(Y)\geq 1$. Therefore, at each step, Thom's transversality theorem tells us only that the inclusion $M\setminus (T_1\cup...\cup T_{k})\hookrightarrow M\setminus (T_1\cup...\cup T_{k-1})$ induces a surjective morphism $\pi_1(M\setminus (T_{1}\cup...\cup T_{k}))\cong \pi_1(M\setminus (T_{1}\cup...\cup T_{k-1}))$. 
Finally let us now denote by $i$ and $j$ the inclusions $j:N\setminus X\hookrightarrow N$ and  $i:M\setminus Y\hookrightarrow M$, respectively. We know that $(f\circ i)_*=(j\circ (f|_{M\setminus Y}))_*$. As  $(j\circ (f|_{M\setminus Y}))_*:\pi_1(M\setminus Y)\rightarrow \pi_1(N)$ is an isomorphism and $i_*:\pi_1(M\setminus Y)\rightarrow \pi_1(M)$ is surjective we can conclude that $f_*:\pi_1(M)\rightarrow \pi_1(N)$ is an isomorphism as desired.
\end{proof}

\begin{corollary}\label{cor:iso-bimero}
Let $\phi:M\rightarrow N$ be a bimeromorphic map between two  compact complex  manifolds. Then $\phi$ induces an isomorphism $\phi_*:\pi_1(M)\rightarrow \pi_1(N)$.
\end{corollary}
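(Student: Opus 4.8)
The plan is to reduce the assertion to the case of an honest modification, i.e.\ to Proposition~\ref{isofgroup}, by replacing the (possibly singular) graph of $\phi$ by a smooth model. Set $\Gamma := \overline{\mathcal{G}(\phi)} \subseteq M \times N$, the closure of the graph; by Definition~\ref{mero}, $\Gamma$ is an irreducible compact complex space and both projections $p_M \colon \Gamma \to M$ and $p_N \colon \Gamma \to N$ are modifications. Choose a resolution of singularities $\pi \colon \widetilde{\Gamma} \to \Gamma$: this is a modification, $\widetilde{\Gamma}$ is a compact complex manifold, and we may take $\widetilde{\Gamma}$ irreducible (since $\Gamma$ is). Because a composition of modifications is again a modification, the holomorphic maps
\[
F := p_M \circ \pi \colon \widetilde{\Gamma} \longrightarrow M, \qquad G := p_N \circ \pi \colon \widetilde{\Gamma} \longrightarrow N
\]
are modifications between compact complex manifolds.

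Next I would apply Proposition~\ref{isofgroup} to each of $F$ and $G$: both $F_* \colon \pi_1(\widetilde{\Gamma}) \to \pi_1(M)$ and $G_* \colon \pi_1(\widetilde{\Gamma}) \to \pi_1(N)$ are isomorphisms, so that $G_* \circ F_*^{-1} \colon \pi_1(M) \to \pi_1(N)$ is an isomorphism. It then remains to identify $G_* \circ F_*^{-1}$ with the homomorphism induced by $\phi$. Let $Z \subseteq M$ be the set of points of indeterminacy of $\phi$, so that $\mathrm{Dom}(\phi) = M \setminus Z$ with $\mathrm{codim}_{\mathbb{C}}(Z) \geq 2$ (both $M$ and $N$ being normal), and recall from the discussion following Definition~\ref{mero} that $p_M$ restricts to a biholomorphism over $M \setminus Z$, where $\Gamma$ coincides with the graph of the holomorphic map $\phi|_{M \setminus Z}$. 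In particular $\pi$ is biholomorphic over $p_M^{-1}(M \setminus Z)$, hence $F$ restricts to a biholomorphism of $F^{-1}(M \setminus Z)$ onto $M \setminus Z$ with $G = \phi|_{M\setminus Z} \circ F$ there. A short diagram chase then shows that $G_* \circ F_*^{-1}$ equals $\phi_* \colon \pi_1(M) \to \pi_1(N)$, where $\phi_*$ is obtained by transporting $(\phi|_{\mathrm{Dom}(\phi)})_*$ along the isomorphism $\pi_1(\mathrm{Dom}(\phi)) \cong \pi_1(M)$; the chase uses that the inclusion $M \setminus Z \hookrightarrow M$ is a $\pi_1$-isomorphism (codimension $\geq 2$) and that $F^{-1}(M\setminus Z) \hookrightarrow \widetilde{\Gamma}$ is $\pi_1$-surjective (codimension $\geq 1$), exactly as in the proof of Proposition~\ref{isofgroup}. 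In particular $\phi_*$ is an isomorphism, as claimed.

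I expect the only genuinely non-formal ingredient, and hence the main obstacle, to be the appeal to resolution of singularities: it is needed precisely because the graph $\Gamma$ can be singular while Proposition~\ref{isofgroup} is stated for manifolds. Everything else --- checking that $F$ and $G$ are modifications, applying Proposition~\ref{isofgroup}, and matching up the fundamental groups --- is routine, the one mild subtlety being that the phrase ``induced by $\phi$'' must be interpreted through $\mathrm{Dom}(\phi) = M \setminus Z$, since $\phi$ itself is not defined on all of $M$.
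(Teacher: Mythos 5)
Your proposal is correct and follows essentially the same route as the paper: resolve the closure of the graph of $\phi$, observe that the two composed projections are modifications between compact complex manifolds, and apply Proposition~\ref{isofgroup} to each. The paper states this in three lines and leaves the identification of $G_*\circ F_*^{-1}$ with $\phi_*$ implicit, whereas you spell it out; that extra care is welcome but not a difference in method.
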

\begin{proof}
We use the notations of Def. \ref{mero}. Let $\pi:L\rightarrow \mathcal{G}(f)$ be a resolution of $\mathcal{G}(f)$. Then $p_M\circ \pi:L\rightarrow M$ and $p_N\circ \pi:L\rightarrow N$ are both modifications. Now the statement is an immediate consequence of Prop. \ref{isofgroup}.
\end{proof}

\section{The Levy-Riemann-Roch Theorem}
We begin by recalling some fundamental facts about modifications.

\begin{theorem}\label{theorem-ueno}
Let $p: L\rightarrow M$ be a proper modification of complex manifolds. Then\\ 
(i) $p_* \mathcal{O}_L=\mathcal{O}_M$;\\ (ii) $R^k p_* \mathcal{O}_L=0$ for $k>0$\,.
\end{theorem}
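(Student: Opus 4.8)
For part (i) the point is an extension statement. The canonical morphism $\mathcal{O}_M\to p_*\mathcal{O}_L$ is injective because $p$ is surjective, so it remains to check surjectivity, and this is local on $M$. As recalled before Proposition \ref{isofgroup}, the minimal center $X\subset M$ outside which $p$ is a biholomorphism has complex codimension at least $2$. Given an open set $U\subseteq M$ and $g\in(p_*\mathcal{O}_L)(U)=\mathcal{O}_L(p^{-1}(U))$, restricting $g$ along the biholomorphism $p^{-1}(U)\setminus p^{-1}(X)\cong U\setminus X$ produces $f_0\in\mathcal{O}_M(U\setminus X)$; since $\mathrm{codim}_{\mathbb{C}}X\geq2$ the Riemann extension theorem yields $f\in\mathcal{O}_M(U)$ with $f|_{U\setminus X}=f_0$, and then $p^*f$ and $g$ agree on the dense open set $p^{-1}(U)\setminus p^{-1}(X)$, hence everywhere. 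Thus $g$ lies in the image of $\mathcal{O}_M(U)\to\mathcal{O}_L(p^{-1}(U))$.

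For part (ii) the sheaves $R^k p_*\mathcal{O}_L$ are coherent and supported on the nowhere dense set $X$, and I would obtain their vanishing through canonical sheaves and duality. First, $p_*\omega_L=\omega_M$ (with $\omega$ the canonical bundle): this is proved exactly as in part (i), now by extending a holomorphic top-degree form on $p^{-1}(U)$ from $U\setminus X$ across $X$. Second, the Grauert--Riemenschneider vanishing theorem — valid for proper modifications of arbitrary complex manifolds, with no Kähler hypothesis — gives $R^k p_*\omega_L=0$ for $k>0$. Finally, relative Serre--Grothendieck duality for the proper morphism $p$ between smooth (hence Cohen--Macaulay, equidimensional) manifolds gives $R p_*\mathcal{O}_L\cong R\mathcal{H}om_M\big(R p_*(\omega_L\otimes p^*\omega_M^{-1}),\mathcal{O}_M\big)$; by the projection formula and the two facts just recorded the argument of $R\mathcal{H}om_M$ equals $(p_*\omega_L)\otimes\omega_M^{-1}=\mathcal{O}_M$, so $R p_*\mathcal{O}_L\cong\mathcal{O}_M$ and in particular $R^k p_*\mathcal{O}_L=0$ for $k>0$.

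The step I expect to carry the real weight is the Grauert--Riemenschneider vanishing; once it is in hand everything else is formal. An alternative keeping all computations elementary would be to reduce, via Hironaka's resolution, to a finite tower of blow-ups with smooth centers: for a single blow-up $\beta$ along a smooth submanifold of codimension $c$ one checks directly that $\beta_*\mathcal{O}=\mathcal{O}$ and $R^k\beta_*\mathcal{O}=0$ for $k>0$, since the exceptional set is locally a $\mathbb{P}^{c-1}$-bundle and, by the formal function theorem, the stalks of $R^k\beta_*\mathcal{O}$ are governed by the groups $H^k(\mathbb{P}^{c-1},\mathcal{O}(n))$ with $n\geq0$, all of which vanish for $k>0$; one then composes by the Leray spectral sequence. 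The obstacle with this route is that a general proper modification of smooth manifolds need not itself be such a tower, so deducing the vanishing for $p$ from a tower dominating it — say along a common resolution $L\leftarrow\widetilde M\to M$ — would, argued naively, require the result already for the intermediate modification $\widetilde M\to L$, which is precisely the circularity that the weak factorization theorem removes and that we wish to avoid. Since the statement of (ii) is classical — it says exactly that smooth complex manifolds have rational singularities — in practice I would cite it (Ueno) and keep the duality computation above as the underlying reason.
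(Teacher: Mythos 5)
Your proposal is correct, but it takes a genuinely different route from the paper: the paper's entire proof of Theorem \ref{theorem-ueno} is a citation of Ueno (Cor.\ 1.14 for (i) and Prop.\ 2.14 for (ii)), whereas you actually argue. Your proof of (i) is complete and matches the standard argument: injectivity of $\mathcal{O}_M\to p_*\mathcal{O}_L$ from surjectivity of $p$, and surjectivity from the second Riemann extension theorem across the minimal center $X$, whose complex codimension is at least $2$ --- a fact the paper itself records (via the indeterminacy locus of $p^{-1}$) just before Proposition \ref{isofgroup}, so you are entitled to it. Your derivation of (ii) from $p_*\omega_L=\omega_M$, Grauert--Riemenschneider vanishing, the projection formula and relative duality is a correct and standard computation (the duality identity $Rp_*\mathcal{O}_L\cong R\mathcal{H}om_M(Rp_*(\omega_L\otimes p^*\omega_M^{-1}),\mathcal{O}_M)$ checks out, and note that via the projection formula you only need vanishing for $\omega_L$ itself, not for the twist). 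The one caveat worth making explicit is that in the analytic category both inputs are heavy: relative Grauert--Riemenschneider vanishing for an arbitrary proper modification of complex manifolds, with no projectivity or K\"ahler hypothesis on $p$, is a genuine theorem (Takegoshi's relative vanishing, or a careful reading of the original Grauert--Riemenschneider paper), and relative Serre--Grothendieck duality for proper analytic morphisms is due to Ramis--Ruget--Verdier; so your ``formal'' reduction rests on machinery of at least the same depth as the statement being proved. You are transparent about this, your diagnosis of the circularity in the na\"ive blowup-tower approach is exactly right, and your fallback of citing Ueno is precisely what the paper does. What your version buys is a self-contained proof of (i) and a conceptual explanation of why (ii) holds (smooth manifolds have rational singularities); what the paper's citation buys is brevity and the avoidance of any commitment to the analytic forms of vanishing and duality.
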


\begin{proof}
See \cite{Ueno} Cor. 1.14 and Prop 2.14, respectively.
\end{proof}

We also recall a particular version of Levy's  Riemann-Roch theorem \cite{Roni}. Let $M$ be a  complex manifold. Consider $K_0^{{\rm hol}} (M)$, the  Grothendieck group of coherent analytic sheaves on $M$. Let  $K^{\text{top}}_{0}(M)$ be the topological K-homology of $M$. Then there exists a homomorphism of abelian groups $\alpha_M: K^{\text{hol}}_0 (M)\rightarrow K^{\text{top}}_{0}(M)$   such that, in particular, the following holds:\\
if $f:M\to N$ is a proper holomorphic map between complex manifolds and $f_{!}:  K^{\text{hol}}_0 (M)\to  K^{\text{hol}}_0 (N)$ is  the direct image homomorphism provided by Grauert's theorem, then 
\begin{equation}\label{direct}
f_* (\alpha_M [\mathcal{O}_M ])= \alpha_N (f_{!} [\mathcal{O}_M ])\,.
\end{equation}
Thus if  $p: L\to M$ is a proper modification of complex manifolds we have 
$$ p_* (\alpha_L [\mathcal{O}_L])=\alpha_M [p_{!} \mathcal{O}_L]=\alpha_M [p_{*} \mathcal{O}_L]=\alpha_M [\mathcal{O}_M]\quad \text{in}\quad K^{\text{top}}_{0}(M)$$ where the first equality comes from \eqref{direct}, the second from (ii) of Theorem \ref{theorem-ueno}  and the third from (i) of Theorem \ref{theorem-ueno}.\\

Moreover we recall that if $M$  is a compact  complex manifold then  the image of  $\alpha_M [\mathcal{O}_M ]\in K^{\text{top}}_{0}(M)$ under the isomorphism  $$K^{\text{top}}_{0}(M)\to KK_0 (C(M),\mathbb{C})$$
is precisely $[\overline{\eth}_M]$, the analytic K-homology class associated to the operator $\overline{\partial}_0+ \overline{\partial}_0^t$  acting on $\Omega^{0,\bullet}(M):=\bigoplus_{q=0}^m \Omega^{0,q}(M)$. See \cite{BeiPiazza2019}, p. 35.

\medskip
\noindent
{\bf Notation}: we set $ K_0^{\text{an}} (M):= KK_0 (C(M),\mathbb{C})$.

\medskip
\noindent

Summarizing, if 
 $p: L\rightarrow M$ is  a  proper modification of  complex manifolds, then 
 \begin{equation}\label{equality}
 p_* (\alpha_L [\mathcal{O}_L])=
 \alpha_M [\mathcal{O}_M]\;\;\text{in}\;\;K^{\text{top}}_{0}(M) 
\end{equation}
and if in addition we require both $L$ and $M$ to be compact then
  \begin{equation}\label{equality-bis}
 p_* [\overline{\eth}_L]=[\overline{\eth}_M]\;\;\text{in}\;\; K_0^{\text{an}} (M)\,.
 \end{equation}
 These equalities will be crucial in what follows.

\section{Bimeromorphic invariance}\label{sect:bimer}

We begin this section by explaining what we mean by bimeromorphy invariance of the higher Todd  genera.
To this end we first recall the Novikov conjecture on the oriented homotopy invariance of the higher signatures.
Let $N$ and $M$ be oriented smooth compact manifolds. If $\Gamma$ is a finitely generated discrete group
and $r:N\to B\Gamma$ is a continuous map, 
then the higher signatures of $N$ are the collection of numbers
\begin{equation}\label{higher-signatures}
\left\{\int_N {\rm L}(N)\wedge r^* \alpha\,,\quad \alpha\in H^*(B\Gamma,\mathbb{Q})\right\}\end{equation} where $L(N)$ denotes the Hirzebruch $L$-class of $N$. By homotopy invariance of these numbers we mean the following: given an orientation preserving 
homotopy equivalence $M\xrightarrow{f}N$, the following equality 
$$\int_N {\rm L}(N)\wedge r^* \alpha=\int_M  {\rm L}(M)\wedge (r\circ f)^* \alpha$$
holds for any $\alpha\in  H^*(B\Gamma,\mathbb{Q})$. The Novikov conjecture asserts that all the numbers
\eqref{higher-signatures} are oriented homotopy invariants. It has been proved for large classes of 
finitely generated discrete groups but remains open in its generality.

Consider now two  compact complex manifolds $M$ and $N$ and a bimeromorphism $f:M\dasharrow N$.  
When we try to follow the above formulation in order to define the bimeromorphic invariance 
of the higher Todd genera we face the problem that $f$, in contrast with the Novikov case,
 is {\it not} everywhere defined. 
We could define the bimeromorphy invariance of the Todd higher genera as follows:
 if $s:M\to B\Gamma$ and $r: N\to B\Gamma$ are two continuous maps such that $s=r\circ f$
 on the dense open subset of $M$ where $f$ is defined, then
 $$\int_N {\rm Td}(N)\wedge r^* \alpha=\int_M   {\rm Td}(M)\wedge s^* \alpha$$
holds for any $\alpha\in  H^*(B\Gamma,\mathbb{Q})$.
This  is how birational invariance for the higher Todd genera is formulated for example  in \cite{Hilsumproj} and, implicitly, in  \cite{BraSchYou}  \cite{rosenberg-tams} \cite{WBlock}, in the context of smooth projective varieties. Recall that this invariance, which holds without additional hypothesis on $\Gamma$, is proved in these papers using
in a fundamantel way  the weak factorization theorem \cite{Factorization}.

In this article we follow a  more general  formulation, based on Corollary \ref{cor:iso-bimero}.
So our goal in the first part of this section is to reformulate in a more general way the bimeromorphic invariance of the higher Todd genera and to   establish it for smooth compact complex manifolds. 

\begin{definition}\label{main-definition}
Let $s:M\to B\Gamma$  be any continuous  map. By bimeromorphic invariance of the higher Todd genera associated to $M$
and $s:M\to B\Gamma$,
$$\left\{\int_M {\rm Td}(M)\wedge s^* [c]\,,\quad [c]\in H^*(B\Gamma,\mathbb{Q})\right\}\,,$$
we mean the equality
\begin{equation}
\label{2wemean}
\int_M {\rm Td}(M)\wedge s^* [c]= \int_N {\rm Td}(N)\wedge r^* [c]
\end{equation}
for each $[c]\in   H^*(B\Gamma,\mathbb{Q})$ and for each continuous map $r:N\rightarrow B\Gamma$ 
satisfying \begin{enumerate}
\item $s(p)=r(f(p))$ for some $p\in \mathrm{Dom}(f)$,
\item $r\circ f:\mathrm{Dom}(f)\rightarrow B\Gamma$ is homotopic to $s|_{\mathrm{Dom}(f)}:\mathrm{Dom}(f)\rightarrow B\Gamma$  through a homotopy fixing $p$.
\end{enumerate}
\end{definition}
It is clear that this definition of bimeromorphic invariance is more general than the one in \cite{Hilsumproj}
\cite{rosenberg-tams} \cite{WBlock} \cite{BraSchYou} , in that
it allows a larger set of compatible maps into $B\Gamma$. Consequently, the birational invariance
or more generally the bimeromorphic invariance of the higher Todd genera proved in this article
is a stronger invariance-property compared to the one established in 
\cite{Hilsumproj}
\cite{rosenberg-tams} \cite{WBlock} \cite{BraSchYou}.

 \smallskip
 \noindent
 The following two results are the crucial steps in establishing the bimeromorphic invariance of the higher Todd genera.

 \begin{proposition}\label{homotopy}
Let $f:M\dasharrow N$ be a bimeromorphic map between two compact complex manifolds and let $Z$ be any $K(\Gamma,1)$ space, $\Gamma$ any discrete finitely generated group. Let $s:M\rightarrow Z$ be any continuous map. We have the following properties:\\
 For any arbitrarily fixed $p\in \mathrm{Dom}(f)$  there exists a continuous map $r_p:N\rightarrow Z$ such that
\begin{enumerate}
\item $s(p)=r_p(f(p))$, 
\item $r_p\circ f:\mathrm{Dom}(f)\rightarrow Z$ is homotopic to $s|_{\mathrm{Dom}(f)}:\mathrm{Dom}(f)\rightarrow Z$ through a homotopy fixing $p$.
\end{enumerate}
\end{proposition}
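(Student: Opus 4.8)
The plan is to build the map $r_p$ in three stages, exploiting the fact that the indeterminacy loci have complex codimension at least $2$ on both sides (after passing to a resolution of the graph), so that removing them does not change $\pi_1$. Fix $p\in\mathrm{Dom}(f)$ and set $q=f(p)\in N$. Choose a resolution $\pi:L\to\overline{\mathcal{G}(f)}$, write $p_M=p_W\circ\pi:L\to M$ and $p_N=p_V\circ\pi:L\to N$; by Corollary~\ref{cor:iso-bimero} and its proof these are modifications, so each induces an isomorphism on $\pi_1$. Pick $\ell\in L$ lying over $p$ (possible since $p\in\mathrm{Dom}(f)$ means $p$ is not an indeterminacy point, so $p_M^{-1}(p)$ is a single point; similarly $p_N(\ell)=q$). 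The composite $s\circ p_M:L\to Z$ is a continuous map to a $K(\Gamma,1)$.

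First I would use the $K(\Gamma,1)$ property of $Z$. Since $(p_N)_*:\pi_1(L,\ell)\to\pi_1(N,q)$ is an isomorphism, the homomorphism $(s\circ p_M)_*\circ(p_N)_*^{-1}:\pi_1(N,q)\to\pi_1(Z,s(p))=\Gamma$ is realized, by the standard obstruction-theory fact that $[X,K(\Gamma,1)]\cong\mathrm{Hom}(\pi_1(X),\Gamma)/\text{conj}$ with basepoints, by a continuous map $r_p:N\to Z$ with $r_p(q)=s(p)$ and $(r_p)_*=(s\circ p_M)_*\circ(p_N)_*^{-1}$ on $\pi_1$. This gives property~(1) outright and arranges that $r_p\circ p_N$ and $s\circ p_M$ induce the same map on $\pi_1(L,\ell)$; hence, again because $Z$ is aspherical, $r_p\circ p_N\simeq s\circ p_M:L\to Z$ through a homotopy fixing $\ell$ (two maps from any CW complex into a $K(\Gamma,1)$ that agree on $\pi_1$ rel a basepoint are homotopic rel that basepoint).

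Next I would push this homotopy on $L$ down to $\mathrm{Dom}(f)$. Over $\mathrm{Dom}(f)$ the map $f$ is a well-defined holomorphic map, and $p_N\circ(p_M)^{-1}=f$ on the open set $p_M^{-1}(\mathrm{Dom}(f))$, where $p_M$ is a biholomorphism onto $\mathrm{Dom}(f)$ (indeterminacy points were removed). Restricting the rel-$\ell$ homotopy $r_p\circ p_N\simeq s\circ p_M$ to $p_M^{-1}(\mathrm{Dom}(f))$ and transporting it via the biholomorphism $p_M^{-1}:\mathrm{Dom}(f)\to p_M^{-1}(\mathrm{Dom}(f))$ yields a homotopy, fixing $p$, between $r_p\circ p_N\circ p_M^{-1}=r_p\circ f$ and $s\circ p_M\circ p_M^{-1}=s$ on $\mathrm{Dom}(f)$, which is exactly property~(2).

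The main obstacle, and the point requiring care, is the basepoint bookkeeping: one must make sure the single point $\ell\in L$ sits simultaneously over $p$ and over $q$, which uses precisely that $p\in\mathrm{Dom}(f)$ (so $p$ is not an indeterminacy point of $f$ and $f^{-1}$-type fibre issues do not arise), and one must invoke the rel-basepoint version of ``maps into a $K(\Gamma,1)$ are classified by $\pi_1$'' rather than the free version, so that the homotopies genuinely fix $p$. A secondary technical point is checking that $p_M$ is a biholomorphism, not merely a modification, over the open set $\mathrm{Dom}(f)$ — this is where the definition of $\mathrm{Dom}(f)$ as the complement of the indeterminacy set, together with the codimension-$\geq 2$ fact recalled after Definition~\ref{mero}, is used. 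Everything else is a routine application of elementary covering/obstruction theory.
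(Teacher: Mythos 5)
Your argument is correct and is essentially the paper's proof: both realize the homomorphism $s_*\circ f_*^{-1}$ (equivalently $(s\circ p_M)_*\circ(p_N)_*^{-1}$) by a based map $r_p:N\to Z$ via the pointed classification of maps into a $K(\Gamma,1)$ (Hatcher, Prop.~1B.9), and then use the rel-basepoint uniqueness in that same statement to get the homotopy in (2). The only cosmetic difference is that you compare $r_p\circ p_N$ and $s\circ p_M$ on the resolution $L$ of the graph and push the homotopy down to $\mathrm{Dom}(f)$, whereas the paper compares $r_p\circ f$ and $s$ directly on $\mathrm{Dom}(f)$, relying on Corollary~\ref{cor:iso-bimero}.
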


\begin{proof}
Let $p\in  {\rm Dom} (f)\subset M$ be an arbitrarily fixed point. By Corollary \ref{cor:iso-bimero} we know that $f_*: \pi_1 (M,p)\to \pi_1 (N,f(p))$ is an isomorphism. Let
us consider the morphism $\pi_1 (N,f(p))\to \pi_1 (Z,s(p))$ equal to $s_*\circ f^{-1}_*$. By 
\cite[Prop. 1B.9, pg 90]{Allen} we know that there exists a continuous map $r_p:N\to Z$
sending $f(p)$ into $s(p)$ and unique up to homotopies fixing $f(p)$, such that
$$ (r_p)_*\circ f_*=s_*$$ as morphisms from $ \pi_1 (M,p)$ to $ \pi_1 (Z,s(p))$. By construction we have $r_p(f(p))=s(p)$ and the morphism $(r_p)_*\circ f_*:\pi_1(M,p)\rightarrow \pi_1(Z,s(p))$ equals $s_*:\pi_1(M,p)\rightarrow\pi_1(Z,s(p))$. Thus \cite{Allen} Prop. 1B.9 tells us that $r_p\circ f:\mathrm{Dom}(f)\rightarrow Z$ is homotopic to $s|_{\mathrm{Dom}(f)}:\mathrm{Dom}(f)\rightarrow Z$ through a homotopy fixing $p$.
\end{proof}

\begin{theorem}\label{2theo:main}
Let $s:M\to Z$ be a continuous map.
 For any continuous map $r:N\rightarrow Z$ satisfying the two properties of Def. \ref{main-definition}. we have 
 \begin{equation}\label{2main}
 s_* [\overline{\eth}_M]= r_*  [\overline{\eth}_N] \;\;\text{in}\;\;K^{\mathrm{an}}_0 (Z)
 \end{equation}
 where we recall that 
 $$K^{\mathrm{an}}_0 (Z)= \operatorname{dirlim}_{X \subset Z, X {\rm compact} } K^{\mathrm{an}}_0 (X)$$
 \end{theorem}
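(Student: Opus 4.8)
The plan is to reduce the statement to the identities \eqref{equality-bis} established in the previous section by passing to a common resolution of the graph of $f$. Let $\Gamma(f) = \overline{\mathcal{G}(f)} \subset M \times N$ be the closure of the graph, and choose a resolution of singularities $\pi : L \to \Gamma(f)$; composing with the two projections we obtain proper modifications $p_M := p_M \circ \pi : L \to M$ and $p_N := p_N \circ \pi : L \to N$ (these are modifications by the discussion in Section 2, as in the proof of Corollary \ref{cor:iso-bimero}). By \eqref{equality-bis} applied to each of these modifications, $(p_M)_* [\overline{\eth}_L] = [\overline{\eth}_M]$ in $K_0^{\mathrm{an}}(M)$ and $(p_N)_* [\overline{\eth}_L] = [\overline{\eth}_N]$ in $K_0^{\mathrm{an}}(N)$. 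Hence, pushing forward to $Z$, it suffices to prove that $(s \circ p_M)_* [\overline{\eth}_L] = (r \circ p_N)_* [\overline{\eth}_L]$ in $K_0^{\mathrm{an}}(Z)$, and for this it is enough to show that the two continuous maps $s \circ p_M : L \to Z$ and $r \circ p_N : L \to Z$ are homotopic.

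The key step is therefore the homotopy $s \circ p_M \simeq r \circ p_N$ on $L$. First I would pick a base point: choose $\ell_0 \in L$ lying over a point $p \in \mathrm{Dom}(f)$ of the sort appearing in Definition \ref{main-definition} (such $\ell_0$ exists since $p_M$ is a biholomorphism over a dense open set, which we may take inside $\mathrm{Dom}(f)$), so that $p_M(\ell_0) = p$ and $p_N(\ell_0) = f(p)$. Over the locus $\mathrm{Dom}(f)$, the maps $p_M$ and $p_N$ are related by $p_N = f \circ p_M$ on $p_M^{-1}(\mathrm{Dom}(f))$; combined with hypothesis (2) of Definition \ref{main-definition}, namely that $r \circ f \simeq s$ on $\mathrm{Dom}(f)$ rel $p$, this gives that $r \circ p_N \simeq s \circ p_M$ on the open dense subset $p_M^{-1}(\mathrm{Dom}(f)) \subset L$, rel $\ell_0$. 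To upgrade this to a homotopy on all of $L$, I would argue on $\pi_1$: since $Z = K(\Gamma,1)$, two maps $L \to Z$ are homotopic rel a base point iff they induce the same homomorphism on $\pi_1$, by \cite[Prop.\ 1B.9]{Allen}. By Proposition \ref{isofgroup} (or rather by the argument inside it), the inclusion of the dense open set $p_M^{-1}(\mathrm{Dom}(f))$ — which is the complement of an analytic subset — induces a surjection on $\pi_1$ of $L$; since the two maps already agree on $\pi_1$ of this dense open set (being homotopic there rel $\ell_0$), they agree on $\pi_1(L,\ell_0)$, hence are homotopic on $L$.

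The main obstacle is this last $\pi_1$-surjectivity point: I must make sure that $L \setminus p_M^{-1}(\mathrm{Dom}(f))$ is an analytic subset of $L$ (it is, being the preimage under the holomorphic map $p_M$ of the analytic indeterminacy locus $Z_f \subset M$, or equivalently the exceptional set of the modification $p_M$) and hence that $\pi_1$ of its complement surjects onto $\pi_1(L)$ — the transversality argument of Proposition \ref{isofgroup} only needs codimension $\geq 1$ for surjectivity, which always holds for a proper analytic subset of a connected complex manifold. One should also double-check that $L$ can be taken connected, which follows from the irreducibility of $\Gamma(f)$ in Definition \ref{mero}. With these points in place, the chain of equalities $(s\circ p_M)_*[\overline{\eth}_L] = (r\circ p_N)_*[\overline{\eth}_L]$ together with \eqref{equality-bis} yields $s_*[\overline{\eth}_M] = r_*[\overline{\eth}_N]$ in $K_0^{\mathrm{an}}(Z)$, as required.
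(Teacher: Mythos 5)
Your proposal is correct and follows essentially the same route as the paper: pass to a resolution $L$ of $\overline{\mathcal{G}(f)}$, apply \eqref{equality-bis} to the two induced modifications, and deduce the homotopy $s\circ p_M\simeq r\circ p_N$ from \cite[Prop.~1B.9]{Allen} since $Z$ is a $K(\Gamma,1)$. The only (harmless) difference is that you establish the equality of the induced $\pi_1$-homomorphisms via surjectivity of $\pi_1$ of the complement of the exceptional analytic set, whereas the paper computes it directly from $f_*=(\beta_N)_*\circ(\beta_M)_*^{-1}$ using Proposition \ref{isofgroup}; these rest on the same transversality input.
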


\begin{proof}
Let $r:N\rightarrow Z$ any continuous map satisfying the assumptions of Prop. \ref{homotopy}. By the very definition of bimeromorphic map $f:M\dasharrow N$ we know that there exists a compact irreducible analytic subvariety $\mathcal{G} (f) $ of $M\times N$ and a pair of modifications  $$p_M: \mathcal{G} (f) \to M\quad\text{and}\quad p_N: \mathcal{G} (f) \to N$$
induced by the natural projections of $M\times N$ onto the first and second factor respectively. Let $b: B\to \mathcal{G} (f)$ be a resolution of $ \mathcal{G} (f)$. By composing $b$ with $p_M$ and $p_N$ we  obtain a pair of modifications  $$M\stackrel{\beta_M}{\longleftarrow}B\stackrel{\beta_N}{\longrightarrow} N.$$ Let $q$ be a point in $B$ such that $b(q)=p$, where $p\in \mathrm{Dom}(f)$ and $r(f(p))=s(p)$. By the assumptions on $r$ we know that $r_*\circ f_*:\pi_1(M,p)\rightarrow \pi_1(Z,s(p))$ coincides with $s_*:\pi_1(M,p)\rightarrow \pi_1(Z,s(p))$. On the other hand, by definition, $f:\mathrm{Dom}(f)\rightarrow N$ equals $\beta_N\circ ({\beta_M}|_{\mathrm{Dom}(f)})^{-1}:\mathrm{Dom}(f)\rightarrow N$. Thus we can conclude that the following two morphisms of groups coincide $$(s\circ \beta_M)_* :\pi_1(B,q)\rightarrow \pi_1(Z,s(p))\ \quad\ \text{and}\ \quad\ (r\circ \beta_N)_*:\pi_1(B,q)\rightarrow \pi_1(Z,s(p)).$$ 
Indeed $$r_*\circ f_* = s_* \Leftrightarrow r_* \circ (\beta_N)_* \circ (\beta_M)_*^{-1}=s_* \Leftrightarrow (r\circ \beta_N)_*=
(s\circ \beta_M)_*\,.$$
Prop. 1B.9 in \cite{Allen} allows us to conclude that $s\circ \beta_M:B\rightarrow Z$ and $r\circ \beta_N: B\rightarrow Z$ are homotopic through a homotopy fixing $q$. Consequently, using \eqref{equality-bis} and the homotopy invariance of K-homology,
we have the following equalities in  $K_0(Z)$: $$r_*([\overline{\eth}_N])=r_*({\beta_N}_*([\overline{\eth}_B])=s_*({\beta_M}_*([\overline{\eth}_B]))=s_*([\overline{\eth}_M]).$$
\end{proof}

 \begin{corollary}\label{hgt}
 The higher Todd genera 
 $$\{ ({\rm Td}(M)\cup s^* \alpha)[M]\,,\,\alpha\in H^* (B\Gamma,\mathbb{Q})\}$$
 are bimeromorphic invariants of $M$ in the sense of  definition \ref{main-definition}.
 \end{corollary}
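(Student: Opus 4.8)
The plan is to translate the higher Todd genera into the Kronecker pairing of cohomology classes of $B\Gamma$ against the push-forward of the Dolbeault K-homology class, and then to quote Theorem \ref{2theo:main}. The essential input is the homological form of the Riemann--Roch--Hirzebruch theorem: for a compact complex manifold $M$ the Chern character of $[\overline{\eth}_M]$, regarded in $K^{\text{top}}_0(M)\cong K^{\text{an}}_0(M)$, is
$$\mathrm{ch}\,[\overline{\eth}_M]={\rm Td}(M)\cap[M]\ \in\ H_{\mathrm{even}}(M,\mathbb{Q}).$$
This is precisely Levy's Riemann--Roch theorem \cite{Roni} applied to the class $\alpha_M[\mathcal{O}_M]$, which is identified with $[\overline{\eth}_M]$ as recalled just before \eqref{equality-bis}. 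Granting this, and using Poincar\'e duality on $M$, the naturality of the Kronecker pairing, and the compatibility of the Chern character with push-forward in K-homology, we obtain, for every $\alpha\in H^*(B\Gamma,\mathbb{Q})$,
$$\big({\rm Td}(M)\cup s^*\alpha\big)[M]=\big\langle s^*\alpha,\ {\rm Td}(M)\cap[M]\big\rangle=\big\langle\alpha,\ \mathrm{ch}\big(s_*[\overline{\eth}_M]\big)\big\rangle,$$
where the right-hand pairing is the natural one between $H^*(B\Gamma,\mathbb{Q})$ and $H_{\mathrm{even}}(B\Gamma,\mathbb{Q})=\operatorname{dirlim}_{X\subset B\Gamma\ \mathrm{compact}}H_{\mathrm{even}}(X,\mathbb{Q})$; it is well defined because $s_*[\overline{\eth}_M]$ is carried by a compact subset of $B\Gamma$.

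Carrying out the identical computation for $N$ and for an arbitrary continuous map $r:N\to B\Gamma$ satisfying the two conditions of Definition \ref{main-definition} gives
$$\big({\rm Td}(N)\cup r^*\alpha\big)[N]=\big\langle\alpha,\ \mathrm{ch}\big(r_*[\overline{\eth}_N]\big)\big\rangle.$$
Now Theorem \ref{2theo:main}, applied with $Z=B\Gamma$, yields $s_*[\overline{\eth}_M]=r_*[\overline{\eth}_N]$ in $K^{\text{an}}_0(B\Gamma)$; hence the two Chern characters coincide and so do the two higher Todd genera. This is exactly equation \eqref{2wemean}, i.e. the bimeromorphic invariance in the sense of Definition \ref{main-definition}, which is the assertion of the Corollary.

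The only genuinely substantive ingredient is the identity $\mathrm{ch}\,[\overline{\eth}_M]={\rm Td}(M)\cap[M]$ together with its compatibility with push-forward; the rest of the argument is formal manipulation of pairings. That identity is classical — it is the Baum--Fulton--MacPherson/Levy Riemann--Roch theorem specialised to the Dolbeault operator, and is precisely what underlies the passage from $\alpha_M[\mathcal{O}_M]$ to $[\overline{\eth}_M]$ exploited throughout Section~3 — so in the write-up I would simply cite it rather than reprove it. The one point requiring mild care is the non-compactness of $B\Gamma$: one works consistently with the direct limits $K^{\text{an}}_0(B\Gamma)=\operatorname{dirlim}_X K^{\text{an}}_0(X)$ and $H_*(B\Gamma,\mathbb{Q})=\operatorname{dirlim}_X H_*(X,\mathbb{Q})$ over compact subsets $X\subset B\Gamma$, over which the Chern character and the homology pairing behave well, so that evaluation against a fixed finite-dimensional class $\alpha\in H^*(B\Gamma,\mathbb{Q})$ is unambiguous.
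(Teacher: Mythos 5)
Your argument is correct and follows essentially the same route as the paper: both reduce the statement to Theorem \ref{2theo:main} via the identity $\Ch_*[\overline{\eth}_M]={\rm PD}({\rm Td}(M))$, the naturality of the homology Chern character under push-forward, and the Kronecker pairing against classes pulled back from $B\Gamma$. The only cosmetic difference is the citation for the key identity (you invoke Levy/Baum--Fulton--MacPherson, the paper invokes \cite[Lemma 11.4.1]{Higson-Roe:KHom} together with the Atiyah--Singer index formula), which does not affect the substance.
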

 
 \begin{proof}
Thanks to Th.\ref{2theo:main} we know that $s_* [\overline{\eth}_M]\in K^{{\rm an}}_* (B\Gamma)$ is a bimeromorphic invariant, in that  $s_* [\overline{\eth}_M]= r_*  [\overline{\eth}_N] \;\;\text{in}\;\;K^{\text{an}}_0 (B\Gamma)$. Thus 
 $\Ch_* (s_* [\overline{\eth}_M])\in H_* (B\Gamma,\mathbb{Q})$ is a bimeromorphic invariant.
 But
 $\Ch_* (s_* [\overline{\eth}_M])=s_* (\Ch_* [\overline{\eth}_M])$ and 
 \begin{equation}\label{ch=}
 \Ch_* [\overline{\eth}_M]= {\rm PD} ({\rm Td}(M))\,,
 \end{equation}
 with ${\rm PD} $ denoting Poincar\'e duality,
 from which the Corollary follows. Here we recall that the homology Chern character is defined as the transpose of the
 inverse of the usual Chern character in K-theory, denoted $\Ch^*$ (see Atiyah and Hirzebruch \cite{MAFH}),
 once we rationalize the K-theory groups and the singular (co)homology groups. The naturality of the homology Chern character follows from its definition and the naturality  of $\Ch^*$. The formula $\Ch_* [\overline{\eth}_M]= {\rm PD} ({\rm Td}(M))$ follows from  \cite[Lemma 11.4.1]{Higson-Roe:KHom} and the Atiyah-Singer index formula for twisted 
 Dirac operators.
  \end{proof}

  \noindent
  For interesting examples, see \cite[Section 5]{rosenberg-tams}.

\begin{remark}
We observe that, thanks to \eqref{ch=}, the higher Todd genera associated to $M$ and $s:M\to B\Gamma$
can also be written as follows:
 $$\{\langle \alpha, s_* (\Ch_* [\overline{\eth}_M])) \rangle\,,\quad \alpha\in H^* (B\Gamma,\mathbb{Q})\}.$$
\end{remark}

 \section{The singular case}
In this last section we extend the results of the previous section to certain singular varieties. We start with  few words on  Hermitian complex spaces. A paracompact  and reduced complex space $X$ is said to be \emph{Hermitian} if  the regular part of $X$ carries a Hermitian metric $h$  such that for every point $p\in X$ there exists an open neighborhood $U\ni p$ in $X$, a proper holomorphic embedding of $U$ into a polydisc $\phi: U \rightarrow \mathbb{D}^N\subset \mathbb{C}^N$ and a Hermitian metric $g$ on $\mathbb{D}^N$ such that $(\phi|_{\reg(U)})^*g=h$, see e.g. \cite{Takeo} or \cite{JRupp}. In this case we will write $(X,h)$ and with a little abuse of language we will say that $h$ is a \emph{Hermitian metric on $X$}. Clearly  analytic sub-varieties of  complex Hermitian manifolds endowed with the metric induced by the Hermitian  metric of the ambient space provide natural examples of  Hermitian complex spaces. Given a compact and irreducible Hermitian complex space $(X,h)$ let us introduce  the following  unbounded, densely defined and self-adjoint operator 
\begin{equation}
\label{Diracrel}
\overline{\eth}_{\mathrm{rel}}:L^2\Omega^{0,\bullet}(\reg(X),h)\rightarrow L^2\Omega^{0,\bullet}(\reg(X),h)
\end{equation}
  defined as $\overline{\eth}_{\mathrm{rel}}:=\overline{\partial}_{0,\min}+\overline{\partial}^t_{0,\max}$ where the latter operator is the rolled-up operator associated to the minimal $L^2$-$\overline{\partial}$-complex $(L^2\Omega^{0,q}(\reg(X),h),\overline{\partial}_{0,q,\min})$. We recall that the domain of \eqref{Diracrel} is $$\mathcal{D}(\overline{\eth}_{\mathrm{rel}})=\bigoplus_q\mathcal{D}(\overline{\partial}_{0,q,\min})\cap \mathcal{D}(\overline{\partial}^t_{0,q-1,\max})$$ $\overline{\partial}_{0,q,\min}:L^2\Omega^{0,q}(\reg(X),h)\rightarrow L^2\Omega^{0,q+1}(\reg(X),h)$ is defined as the graph closure of  $\overline{\partial}_{0,q,}:\Omega^{0,q}_c(\reg(X))\rightarrow \Omega_c^{0,q+1}(\reg(X))$ and $\overline{\partial}_{0,q,\max}^t:$ $L^2\Omega^{0,q+1}(\reg(X),h)\rightarrow $ $L^2\Omega^{0,q}(\reg(X),h)$ is the adjoint of $$\overline{\partial}_{0,q,\min}:L^2\Omega^{0,q}(\reg(X),h)\rightarrow L^2\Omega^{0,q+1}(\reg(X),h).$$ According to \cite[Cor. 5.2]{FraBei} or \cite[Th. 1.2]{OvrRup}  we know that \eqref{Diracrel}  has entirely discrete spectrum if $\dim(\sing(X))=0$ and thus it defines a class $[\overline{\eth}_{\mathrm{rel}}]\in K_0^{\mathrm{an}}(X):=KK_0(C(X),\mathbb{C})$, see \cite[Prop. 3.6]{BeiPiazza2019}.

\begin{definition}\label{definition-htgsingular}
Let $V$ be a compact and irreducible  Hermitian complex space with only  isolated singularities. Let $\Gamma$ be any discrete finitely generated group and let $s:V\to B\Gamma$ be a continuous map. The numbers 
$$\{\langle\alpha, s_* (\Ch_* [\overline{\eth}^V_{\mathrm{rel}}])\rangle\,,\;\alpha\in H^* (B\Gamma,\mathbb{Q})\}$$ 
are the relative higher analytic Todd genera of $M$ and $s:M\to B\Gamma$.
\end{definition}

We point out that if $V$ has only rational singularities and  $\dim(\sing(V))=0$ then $\alpha([\mathcal{O}_X])=[\overline{\eth}_{\mathrm{rel}}]$, that is, the Levy class equals the relative class, see \cite[Prop. 6.2]{BeiPiazza2019}\footnote{The assumption that $X$ has only rational singularities has to be added to the statement of \cite[Prop. 6.2]{BeiPiazza2019}}. Hence in this case we can replace $[\overline{\eth}_{\mathrm{rel}}]$ with $\alpha([\mathcal{O}_X])$ in Def. \ref{definition-htgsingular}. We remind the reader the $V$ has only rational singularities if it is normal and  for some resolution $\pi:M\rightarrow V$ (and hence for all) we have  $R^k\pi_*\mathcal{O}_M=0$, $k>0$. Log-terminal singularities, canonical singularities and toric singularities provide examples of rational singularities.
Consider now a pair of compact and irreducible Hermitian complex spaces $(V,h)$ and $(W,k)$ both with $\dim(\sing(V))=\dim(\sing(W))=0$. Assume that there exists  a  bimeromorphic map $\psi: V\dashrightarrow W$  and  that both $V$ and $W$ admit resolutions that preserve the fundamental group. Put it differently, there exist resolutions  $\pi:M\rightarrow V$ and $\rho:N\rightarrow W$ such that both maps $\pi_*:\pi_1(M)\rightarrow \pi_1(V)$ and $\rho_*:\pi_1(N)\rightarrow \pi_1(W)$ are isomorphisms. Note that if the fundamental group is preserved by a resolution then it is preserved by any resolution. Examples are provided by projective varieties with log-terminal singularities, see for instance \cite{Takayama} and the references therein.
 Let $\phi:M\dasharrow N$ be the bimeromorphic map induced by $\pi$, $\psi$ and $\rho$. Note that the composition $\rho_*\circ \phi_*\circ (\pi_*)^{-1}:\pi_1(V)\rightarrow \pi_1(W)$ is an isomorphism. Let  $Z$ be any $K(\Gamma,1)$ space, $\Gamma$ any discrete finitely generated group, and let $s:V\rightarrow Z$ be any continuous map.

 \begin{definition}\label{main-definition-bis}
Let $\psi:V\dashrightarrow W$ be as above, in particular both $V$ and $W$ admit resolutions preserving the fundamental group.  Let $s:V\to B\Gamma$  be any continuous  map and let $r:W\rightarrow B\Gamma$ be a continuous map such that for a pair of resolutions $\pi:M\rightarrow V$ and $\rho:N\rightarrow W$ it holds:
\begin{enumerate}
\item $s(\pi(p))=r (\rho(\phi(p)))$ for some $p\in \mathrm{Dom}(\phi)$, with $\phi:M\dasharrow N$ 
 the bimeromorphic map induced by $\pi$, $\psi$ and $\rho$,
\item $r \circ \rho\circ \phi:\mathrm{Dom}(\phi)\rightarrow Z$ is homotopic to $s\circ \pi|_{\mathrm{Dom}(\phi)}:\mathrm{Dom}(\phi)\rightarrow Z$ through a homotopy fixing $p$.
\end{enumerate}
By bimeromorphic invariance of the  relative higher analytic Todd genera
$$\{\langle\alpha, s_* (\Ch_* [\overline{\eth}^V_{\mathrm{rel}}])\rangle\,,\;\alpha\in H^* (B\Gamma,\mathbb{Q})\}$$
we mean the equality
\begin{equation}
\label{2wemean-bis}
\langle\alpha, s_* (\Ch_* [\overline{\eth}^V_{\mathrm{rel}}])\rangle=
\langle \alpha, r_* (\Ch_* [\overline{\eth}^W_{\mathrm{rel}}])\rangle
\end{equation}
for each $[\alpha]\in   H^*(B\Gamma,\mathbb{Q})$ and for each  $r:W\rightarrow B\Gamma$ satisfying $(1)$ and $(2)$ above.
\end{definition}

 Proceeding exactly as in  the proof of  Prop. \ref{homotopy} we have the following property:
 for any arbitrarily fixed $p\in \mathrm{Dom}(\phi)$  there exists a continuous map $r_p:W\rightarrow Z$ satisfying (1) and (2) 
 in Definition \ref{main-definition-bis}.
 In other words $r_p\circ \rho:N\rightarrow Z$ and $s\circ \pi:M\rightarrow Z$ satisfy Prop. \ref{homotopy} with respect to $\phi:M\dashrightarrow N$ and $p\in \mathrm{Dom}(\phi)$.

\bigskip

 We have now all the ingredients for the following 
 \begin{proposition}\label{2sTability}
Let $\psi:V\dashrightarrow W$ be as above and let $s:V\to B\Gamma$  be a continuous  map. For any continuous map $r:W\rightarrow Z$ satisfying the two properties listed in Def. \ref{main-definition-bis} we have 
 \begin{equation}\label{2main}
 s_* [\overline{\eth}_{\mathrm{rel}}^V]= r_*  [\overline{\eth}_{\mathrm{rel}}^W] \;\;\text{in}\;\;K^{\mathrm{an}}_0 (Z)
 \end{equation}
\end{proposition}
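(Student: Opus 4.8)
The plan is to reduce Proposition \ref{2sTability} to the already-established identity \eqref{equality-bis} by passing to simultaneous resolutions of $V$ and $W$, exactly as in the proof of Theorem \ref{2theo:main}. First I would fix $p\in\mathrm{Dom}(\phi)\subset M$ with $s(\pi(p))=r(\rho(\phi(p)))$, and choose a resolution $b:B\to\mathcal{G}(\phi)$ of the graph of the induced bimeromorphic map $\phi:M\dasharrow N$; composing $b$ with the two projections gives modifications $\beta_M:B\to M$ and $\beta_N:B\to N$. Pick $q\in B$ with $\beta_M(q)=p$, so $\beta_N(q)=\phi(p)$. The key algebraic point, proved just as in Theorem \ref{2theo:main} using $r\circ\rho\circ\phi\simeq s\circ\pi$ rel $p$ together with $\phi=\beta_N\circ(\beta_M|_{\mathrm{Dom}(\phi)})^{-1}$, is that the two homomorphisms $(s\circ\pi\circ\beta_M)_*$ and $(r\circ\rho\circ\beta_N)_*$ from $\pi_1(B,q)$ to $\pi_1(Z,s(\pi(p)))$ coincide. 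Since $Z=B\Gamma$ is a $K(\Gamma,1)$, Prop.~1B.9 in \cite{Allen} then gives that $s\circ\pi\circ\beta_M:B\to Z$ and $r\circ\rho\circ\beta_N:B\to Z$ are homotopic.

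Next I would push the fundamental analytic class $[\overline{\eth}_B]$ around the resulting diagram. Here I must connect the \emph{resolution} classes to the \emph{relative} classes on the singular spaces. The composite maps $\pi\circ\beta_M:B\to V$ and $\rho\circ\beta_N:B\to W$ are themselves resolutions of $V$ and $W$ respectively, and the hypothesis on $\psi$ (namely that $V$ and $W$ admit resolutions preserving $\pi_1$, hence all resolutions do) is in force. So the needed ingredient is: for a resolution $\mu:\widetilde{X}\to X$ of a compact irreducible Hermitian complex space with isolated singularities, one has $\mu_*[\overline{\eth}_{\widetilde X}]=[\overline{\eth}^X_{\mathrm{rel}}]$ in $K_0^{\mathrm{an}}(X)$. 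This is where the ``relative'' nature of $\overline{\eth}_{\mathrm{rel}}$ (minimal $\db$, maximal $\db^t$) is designed to match the pushforward under a modification, paralleling \eqref{equality} and Theorem \ref{theorem-ueno}; I expect this to be available from the references already cited (the discussion around \cite[Prop. 6.2]{BeiPiazza2019}, \cite[Prop. 3.6]{BeiPiazza2019}, \cite[Cor. 5.2]{FraBei}/\cite[Th. 1.2]{OvrRup}) or to be an easy consequence of \eqref{equality-bis} applied to $\beta_M,\beta_N$ together with the resolution-of-$V$ identity. Granting this, the chain of equalities in $K_0^{\mathrm{an}}(Z)$ reads
$$
r_*[\overline{\eth}^W_{\mathrm{rel}}]=r_*(\rho_*({\beta_N}_*[\overline{\eth}_B]))=(r\circ\rho\circ\beta_N)_*[\overline{\eth}_B]=(s\circ\pi\circ\beta_M)_*[\overline{\eth}_B]=s_*(\pi_*({\beta_M}_*[\overline{\eth}_B]))=s_*[\overline{\eth}^V_{\mathrm{rel}}],
$$
using \eqref{equality-bis} for the two modifications $\beta_M,\beta_N$, the resolution identity for $\pi$ and $\rho$, and the homotopy invariance of analytic K-homology for the middle equality.

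The main obstacle, and the only step that is not a verbatim transcription of Theorem \ref{2theo:main}, is precisely the identification $\mu_*[\overline{\eth}_{\widetilde X}]=[\overline{\eth}^X_{\mathrm{rel}}]$ for resolutions of the singular spaces in question: one must know that pushing the Dolbeault class of a resolution down to $X$ lands on the relative class, not on some other $K$-homology class supported on $X$. For $X$ with only \emph{rational} singularities this follows from $\alpha([\mathcal{O}_X])=[\overline{\eth}^X_{\mathrm{rel}}]$ (the cited addendum to \cite[Prop. 6.2]{BeiPiazza2019}) combined with \eqref{equality}; in the stated generality — $V,W$ merely Hermitian with isolated singularities admitting $\pi_1$-preserving resolutions — I would invoke the corresponding comparison result from \cite{BeiPiazza2019} (or argue directly that, since both $\pi\circ\beta_M$ and any other resolution dominate and agree away from codimension $\ge 2$, the pushforward class is independent of the resolution and equals the relative one). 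Everything else — the choice of $q$, the $\pi_1$ computation, the appeal to \cite[Prop. 1B.9]{Allen}, and homotopy invariance of $K^{\mathrm{an}}_0$ — is routine and identical to the smooth case.
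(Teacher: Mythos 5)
Your proposal is correct and follows essentially the same route as the paper: the paper simply cites \cite[Th.~4.1]{BeiPiazza2019} for the identity $\mu_*[\overline{\eth}_{\widetilde X}]=[\overline{\eth}^X_{\mathrm{rel}}]$ (the very ingredient you single out as the main obstacle) and then applies Theorem \ref{2theo:main} as a black box to $\phi:M\dasharrow N$ with the maps $s\circ\pi$ and $r\circ\rho$, whereas you unwind the proof of Theorem \ref{2theo:main} through the resolution of $\mathcal{G}(\phi)$. The substance is identical, and your hedging about where the resolution identity lives is resolved by that single citation.
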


\begin{proof}
Thanks to   \cite[Th. 4.1]{BeiPiazza2019} we know that $[\overline{\eth}_{\mathrm{rel}}]=\pi_*[\overline{\eth}_M]$, where $\pi:M\rightarrow X$ is any resolution of $X$. Now using \cite[Th. 4.1]{BeiPiazza2019} and  Th. \ref{2theo:main} we have $$ s_* [\overline{\eth}_{\mathrm{rel}}^V]=s_*(\pi_* [\overline{\eth}_M])=r_*(\rho_*[\overline{\eth}_N])=r_*[\overline{\eth}_{\mathrm{rel}}^W].$$
\end{proof}

From Proposition \ref{2sTability} we obtain immediately the following Corollary

 \begin{corollary}\label{2sTability-bis}
 The  relative higher analytic Todd genera
$$\{\langle\alpha, s_* (\Ch_* [\overline{\eth}^V_{\mathrm{rel}}])\rangle\,,\;\alpha\in H^* (B\Gamma,\mathbb{Q})\}$$ 
are bimeromorphic invariants in the sense of Definition \ref{main-definition-bis}.
\end{corollary}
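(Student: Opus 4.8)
The plan is to deduce Corollary \ref{2sTability-bis} directly from Proposition \ref{2sTability} by the very same argument already used to pass from Theorem \ref{2theo:main} to Corollary \ref{hgt}. Since all the work has been done, the task is to unwind the naturality of the homology Chern character and of the pushforward, exactly as in the smooth case.

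Concretely, I would proceed as follows. First, apply Proposition \ref{2sTability} with $Z = B\Gamma$: for any continuous $r:W\to B\Gamma$ satisfying conditions (1) and (2) of Definition \ref{main-definition-bis}, we have the equality $s_*[\overline{\eth}^V_{\mathrm{rel}}] = r_*[\overline{\eth}^W_{\mathrm{rel}}]$ in $K^{\mathrm{an}}_0(B\Gamma)$. Second, apply the homology Chern character $\Ch_*$ (defined as in the proof of Corollary \ref{hgt}, after rationalization) to both sides, obtaining $\Ch_*(s_*[\overline{\eth}^V_{\mathrm{rel}}]) = \Ch_*(r_*[\overline{\eth}^W_{\mathrm{rel}}])$ in $H_*(B\Gamma,\mathbb{Q})$. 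Third, invoke the naturality of $\Ch_*$ with respect to continuous maps, so that $\Ch_*(s_*[\overline{\eth}^V_{\mathrm{rel}}]) = s_*(\Ch_*[\overline{\eth}^V_{\mathrm{rel}}])$ and similarly for $r$ and $W$; hence $s_*(\Ch_*[\overline{\eth}^V_{\mathrm{rel}}]) = r_*(\Ch_*[\overline{\eth}^W_{\mathrm{rel}}])$ in $H_*(B\Gamma,\mathbb{Q})$. Fourth, pair both sides with an arbitrary class $\alpha\in H^*(B\Gamma,\mathbb{Q})$ to obtain exactly the equality \eqref{2wemean-bis}, which is the assertion that the relative higher analytic Todd genera are bimeromorphic invariants in the sense of Definition \ref{main-definition-bis}.

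I do not anticipate a genuine obstacle here: the substance is Proposition \ref{2sTability}, which in turn rests on \cite[Th. 4.1]{BeiPiazza2019} (identifying $[\overline{\eth}_{\mathrm{rel}}]$ with $\pi_*[\overline{\eth}_M]$ for a resolution) together with Theorem \ref{2theo:main}. The only point that deserves a sentence of care is that the homology Chern character used here is the one already introduced in the proof of Corollary \ref{hgt} — the transpose of the inverse of the K-theoretic Chern character, after rationalizing — and that its naturality under continuous maps is precisely what licenses the interchange $\Ch_*\circ s_* = s_*\circ\Ch_*$. One may therefore simply write that the Corollary follows from Proposition \ref{2sTability} by applying $\Ch_*$, using its naturality, and pairing with $\alpha\in H^*(B\Gamma,\mathbb{Q})$, exactly as in the proof of Corollary \ref{hgt}.
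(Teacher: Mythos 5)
Your proposal is correct and follows exactly the route the paper takes: the paper states that the Corollary follows immediately from Proposition \ref{2sTability}, with the implicit unwinding (apply $\Ch_*$, use its naturality under $s_*$ and $r_*$, then pair with $\alpha\in H^*(B\Gamma,\mathbb{Q})$) being precisely the argument already spelled out in the proof of Corollary \ref{hgt}. Your write-up simply makes those routine steps explicit, so there is nothing to correct.
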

For more on analytic Todd genera in the singular setting we refer the reader to \cite{BeiPiazza2019}.

\begin{remark}
Proposition \ref{2sTability} extends to Hermitian complex spaces, but also reformulate, Proposition 7.1
 in  \cite{BeiPiazza2019}. We warn the reader that the proof of Proposition 7.1 in 
   \cite{BeiPiazza2019} is not correct \footnote{in  Lemma 7.2  in    \cite{BeiPiazza2019}
  we cannot conclude that $\ell=\pi\circ r$ up to homotopy, as $\tilde{M}$ and $\pi^*\tilde{V}$ are isomorphic as coverings but not as principal bundles.} and that  Proposition \ref{2sTability} 
  and Corollary \ref{2sTability-bis} are now the correct version of that result.
\end{remark}

\begin{acknowledgements}
This paper was partially written while the first author was a postdoc at the department of Mathematics of the University of Padova. He wishes to thank that institution  for financial support. We thank Jonathan Rosenberg and J\"org Sch\"urmann for useful comments and interesting discussions.
\end{acknowledgements}

\end{document}